\newtheorem{theorem}{\indent Theorem}[section]
\newtheorem{lemma}{\indent Lemma}[section]\rm
\newtheorem{remark}{\indent Remark}[section]\rm
\journal{}
\begin{document}

\begin{frontmatter}



\title{Solitary wave of the Schr\"{o}dinger lattice system with nonlinear
hopping}


\author{Ming Cheng}\cortext[cor1]{Corresponding author} \ead{jlumcheng@hotmail.com}
\address{College of Mathematics, Jilin University, Changchun 130012, P.R. China}

\begin{abstract}
This paper is concerned with the nonlinear Schr\"{o}dinger lattice with nonlinear hopping. Via variation approach and the Nehari manifold argument, we obtain two types of solution: periodic ground state and localized ground state. Moreover, we consider the convergence of periodic solutions to the solitary wave.
\end{abstract}

\end{frontmatter}


\section{Introduction}
In the last decades, a great deal of attention has been paid to study the existence of solitary wave for the lattice systems\cite{Joh,Aub,Zha}. They play a role in lots of physical models, such as nonlinear waves in crystals and arrays of coupled optical
waveguides. The discrete nonlinear Schr\"{o}dinger lattice is one of the most famous models in
mathematics and physics. The existence and properties of  discrete
breathers(periodic in time and spatially localized) in discrete nonlinear Schr\"{o}dinger lattice have been
considered in a number of studies\cite{Cue,Kara2}.

In the present paper, we consider a variant of the discrete nonlinear Schr\"{o}dinger lattice as follows:
\begin{align}\label{1.1}
i\dot{\psi}_{l}+(\triangle_{d}\psi)_{l}+\alpha\psi_{l}\sum_{j=1}^{d}(\mathcal{T}_{j}\psi)_{l}+\beta|\psi_{l}|^{2\sigma}\psi_{l}=0,\quad l\in\mathbb{Z}^{d},
\end{align}
where $\alpha,\beta\in\mathbb{R}$, $(\triangle_{d}\psi)_{l}=\sum_{m\in N_{d}}\psi_{m}-\psi_{l}$ and  the nonlinear operator $\mathcal{T}$ is defined by
$$
(\mathcal{T}_{j}\psi)_{l}=|\psi_{l_{1},\ldots,l_{j-1},l_{j}+1,l_{j+1},\ldots,l_{d}}|^{2}+|\psi_{l_{1},\ldots,l_{j-1},l_{j}-1,l_{j+1},\ldots,l_{d}}|^{2}.
$$
Here, $N_{d}$ denotes the set of the nearest neighbors of the point $l\in\mathbb{Z}^{d}$.

Note that for $\alpha=0,\beta\neq 0$, it recovers the classical  nonlinear Schr\"{o}dinger lattice. For $\alpha\neq 0,\beta\neq 0$, it denotes the  Schr\"{o}dinger lattice with nonlinear hopping.

There has been a lot of  interests in this equation as the modeling of  waveguide arrays.  Also, nonlinear hopping terms appear from Klein-Gordon and Fermi-Pasta-Ulam chains of anharmonic oscillators coupled with anharmonic inter-site potentials, or mixed FPU/KG chains.  N. I. Karachalios \emph{et al.}  discuss the energy thresholds in the setting of DNLS lattice with  nonlinear  hopping terms by using fixed point method. The numerical results have also been obtained in their paper\cite{Kara}.

Our aim is to investigate the existence of nontrivial solitary wave for the  infinite dimensional lattice  \eqref{1.1}. Here, we only consider the case of one dimension. i. e., $d=1$. The  case of $d>1$ is similar. It notes that for classical nonlinear Schr\"{o}dinger lattice, Weinstein\cite{Wein} discusses a connection among the dimensionality, the degree of the nonlinearity and the existence of the excitation threshold. They prove that if the degree of the nonlinearity $\sigma$ satisfies $\sigma\geq \frac{2}{d}$ where $d$ is the dimension, then there exists a ground state for the total power is greater than the excitation threshold and there is no ground state for the total power is less than the excitation threshold. However, we get that the power of solitary wave always has a lower bound for the equation \eqref{1.1} with $\sigma\geq 1$.

The paper is organized as follows. In Section 2, we firstly consider the $k$-periodic problem. Note that the dimension in space variable is finite. We obtain the nontrival periodic solution by Nehari manifolds argument\cite{Neh}. The existence of solitary wave is more complex. In Section 3, we follow the idea of \cite{Pan1,Pan2,Pan3,Pan4} to obtain the solitary wave. The key point is to show the norms of periodic ground state are bounded. It is based on the concentration compactness. In  Section 4, we concern the convergence of periodic ground states to a solitary ground state.

\section{Periodic solution}

In this paper, we consider the the following equation:
\begin{align}\label{2.2}
i\dot{\psi}_{l}+(\triangle_{d}\psi)_{l}+\alpha \psi_{l}(|\psi_{l+1}|^{2}+|\psi_{l-1}|^{2})+\beta|\psi_{l}|^{2\sigma}\psi_{l}=0, \quad l\in\mathbb{Z},
\end{align}
where $\sigma\geq 1$.

To obtain breather, we seek the solution:
$$
\psi_{l}=e^{-i\omega t}u_{l}.
$$
The equation of $u_{l}$ is
\begin{align}\label{2.3}
\omega u_{l}+(\triangle_{d}u)_{l}+\alpha u_{l}(|u_{l+1}|^{2}+|u_{l-1}|^{2})+\beta|u_{l}|^{2\sigma}u_{l}=0, \quad l\in\mathbb{Z}.
\end{align}
Actually, we give the proofs only in the focusing  case with  $\alpha,\beta>0$ and $\omega<0$. For the defocusing case with $\alpha,\beta<0$ and $\omega>4$, the argument is similar. Here, we omit the details.

In this section, we prove the existence of $k$-periodic solution which satisfies
$$
u_{l+k}=u_{l},\quad\mbox{for}\quad l\in\mathbb{Z},
$$
where $k>2$ is an integer.

Let
$$
P_{k}=\Big\{l\in\mathbb{Z}|-[\frac{k}{2}]\leq l\leq k-[\frac{k}{2}]-1\Big\}.
$$
Consider the Banach space $l_{k}^{p}$ with norm:
$$
||u||_{l_{k}^{p}}^{p}=\sum_{l\in P_{k}}|u_{l}|^{p}.
$$
We mention that
$$
||u||_{l_{k}^{q}}\leq ||u||_{l_{k}^{p}},\quad 1\leq p\leq q\leq \infty.
$$
Denote that $\langle\cdot,\cdot\rangle_{k}$ is natural inner product in $l^{2}_{k}$.

Define the  functional
\begin{align*}
J_{k}(u)=\langle-\triangle_{d}u,u\rangle_{k}-\omega\langle u,u\rangle_{k}-\alpha\sum_{l\in P_{k}}|u_{l}|^{2}|u_{l+1}|^{2}-\frac{\beta}{\sigma+1}\sum_{l\in P_{k}}|u_{l}|^{2\sigma+2},
\end{align*}
and Nehari manifold
\begin{align*}
\mathcal{N}_{k}=\Big\{&u\in l^{2}_{k}|I_{k}(u)=\langle-\triangle_{d}u,u\rangle_{k}-\omega\langle u,u\rangle_{k}-\alpha\sum_{l\in P_{k}}|u_{l}|^{2}(|u_{l+1}|^{2}+|u_{l-1}|^{2})\nonumber\\
&-\beta\sum_{l\in P_{k}}|u_{l}|^{2\sigma+2}=0,u\neq 0\Big\}.
\end{align*}
Then, the minimizer of the constrained variational problem:
\begin{align*}
m_{k}=\inf_{u\in \mathcal{N}_{k}}\{J_{k}(u)\}
\end{align*}
is the nontrivial periodic solution of $\eqref{2.3}$. We mention that the minimizer  is called a periodic ground state.

Note that
$$
0 \leq \langle\triangle_{d}u,u\rangle_{k} \leq 4||u||^{2}_{l^{2}_{k}},\quad \mbox{for}\quad u\in l^{2}_{k}.
$$
We want to obtain the periodic solution with prescribed frequency $\omega<0$.
With the Nehari manifold approach, we have the following result.
\begin{theorem}\label{th2.1}
Assume that the frequency $\omega<0$ and $\alpha,\beta>0$.
There exists a positive $k$-periodic ground state $u^{k}$ for the equation $\eqref{2.3}$.
\end{theorem}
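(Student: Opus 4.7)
The plan is a standard Nehari-manifold argument, simplified by the fact that $l_k^2$ is finite-dimensional (of dimension $k$), so compactness of bounded sequences is automatic and the work is structural. For $u \in l_k^2 \setminus \{0\}$ write
$$
A := \langle -\triangle_d u, u\rangle_k - \omega\|u\|_{l_k^2}^2, \qquad B := \alpha\sum_{l \in P_k}|u_l|^2|u_{l+1}|^2, \qquad C := \beta\|u\|_{l_k^{2\sigma+2}}^{2\sigma+2}.
$$
By $k$-periodicity $\sum|u_l|^2|u_{l-1}|^2 = \sum|u_l|^2|u_{l+1}|^2$, so
$$
I_k(tu) = At^2 - 2Bt^4 - Ct^{2\sigma+2}, \qquad J_k(tu) = At^2 - Bt^4 - \tfrac{C}{\sigma+1}t^{2\sigma+2}.
$$
Since $\omega < 0$ and $\langle -\triangle_d u,u\rangle_k \geq 0$, one has $A > 0$ for $u \neq 0$. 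The equation $I_k(tu) = 0$ becomes $A = 2Bt^2 + Ct^{2\sigma}$, whose right-hand side is strictly increasing from $0$ to $\infty$, so there is a unique $t_u > 0$ with $t_u u \in \mathcal{N}_k$, and $t \mapsto J_k(tu)$ is strictly maximized at $t_u$.

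I would next show $m_k > 0$ and that minimizing sequences are bounded. On $\mathcal{N}_k$ the identity $A = 2B + C$ yields $J_k(u) = B + \tfrac{\sigma}{\sigma+1}C \geq \tfrac{\sigma}{2(\sigma+1)}A$. From the embeddings $\|u\|_{l_k^p} \leq \|u\|_{l_k^2}$ for $p \geq 2$ stated earlier in the section, $B \leq \alpha\|u\|_{l_k^2}^4$ and $C \leq \beta\|u\|_{l_k^2}^{2\sigma+2}$, so $-\omega\|u\|_{l_k^2}^2 \leq A \leq 2\alpha\|u\|_{l_k^2}^4 + \beta\|u\|_{l_k^2}^{2\sigma+2}$ forces $\|u\|_{l_k^2} \geq \delta(\omega,\alpha,\beta,\sigma) > 0$ uniformly on $\mathcal{N}_k$, giving $m_k \geq \tfrac{-\omega\sigma\delta^2}{2(\sigma+1)} > 0$. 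For any minimizing sequence $\{u_n\} \subset \mathcal{N}_k$, boundedness of $J_k(u_n) = B_n + \tfrac{\sigma}{\sigma+1}C_n$ controls $B_n, C_n$, hence $A_n = 2B_n + C_n$, hence $\|u_n\|_{l_k^2}$. Finite-dimensionality supplies a convergent subsequence $u_n \to u^k$, and continuity of $I_k$ and $J_k$ together with the uniform lower bound $\|u_n\| \geq \delta$ give $u^k \in \mathcal{N}_k$ with $J_k(u^k) = m_k$.

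To show $u^k$ actually solves \eqref{2.3}, I would apply a Lagrange-multiplier argument. By Euler's relation for homogeneous functions, $\langle \nabla J_k(u^k), u^k\rangle = 2A - 4B - 2C = 2I_k(u^k) = 0$, while on $\mathcal{N}_k$ one computes $\langle \nabla I_k(u^k), u^k\rangle = 2A - 8B - (2\sigma+2)C = -4B - 2\sigma C < 0$ since $\sigma \geq 1$ and $C > 0$. Thus $\nabla I_k(u^k) \neq 0$, and the multiplier in $\nabla J_k(u^k) = \lambda\nabla I_k(u^k)$ must vanish, so $u^k$ is a free critical point of $J_k$ and solves \eqref{2.3}.

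The main obstacle is positivity. Replacing $u^k$ by $|u^k|$ affects only the Laplacian term in $J_k$, and the discrete Kato-type inequality $\langle -\triangle_d |u|, |u|\rangle_k \leq \langle -\triangle_d u, u\rangle_k$ (from $\bigl||a|-|b|\bigr|\leq|a-b|$) gives $J_k(t|u^k|) \leq J_k(tu^k)$ for every $t > 0$. Taking the maximum in $t$ and using the first step, $\max_t J_k(t|u^k|) \leq \max_t J_k(tu^k) = m_k$, so rescaling $|u^k|$ back onto $\mathcal{N}_k$ produces a non-negative minimizer $\tilde u^k$ that still solves \eqref{2.3} by the Lagrange argument above. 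Strict positivity then follows from a discrete strong maximum principle on the cycle $P_k$: if $\tilde u^k_l = 0$ at some site, substitution into \eqref{2.3} forces $\tilde u^k_{l-1} + \tilde u^k_{l+1} = 0$, and non-negativity makes both vanish; iterating around the finite cycle contradicts $\tilde u^k \not\equiv 0$. The delicate points are verifying the Kato inequality against the paper's sign convention for $\triangle_d$ and ensuring the rescaling in the Nehari manifold preserves the minimum.
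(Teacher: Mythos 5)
Your proposal is correct and follows essentially the same route as the paper: Nehari fibering maps (your $t\mapsto I_k(tu)$, $J_k(tu)$ analysis matches the paper's Lemmas on $\rho$ and $\theta$), the uniform lower bound $\|u\|_{l^2_k}\geq\delta$ from $-\omega\|u\|^2\leq 2\alpha\|u\|^4+\beta\|u\|^{2\sigma+2}$, finite-dimensional compactness for the minimizing sequence, elimination of the Lagrange multiplier via $\langle\nabla I_k(u^k),u^k\rangle<0$ on $\mathcal{N}_k$, and the Kato inequality $\langle-\triangle_d|u|,|u|\rangle_k\leq\langle-\triangle_d u,u\rangle_k$ with rescaling to get a nonnegative minimizer. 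The only divergence is the final strict-positivity step, where the paper invokes positivity of the Green function of $-\triangle_d-\omega$ for $\omega<0$ (citing Teschl) while you use an elementary discrete strong maximum principle on the cycle; both are valid, and yours is more self-contained.
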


\begin{lemma}\label{lem2.1}
Under the assumptions of Theorem \ref{th2.1}. The Nehari manifold $\mathcal{N}_{k}$ is nonempty.
\end{lemma}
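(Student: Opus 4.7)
The plan is to construct an element of $\mathcal{N}_k$ by suitably rescaling a fixed nonzero test sequence. For any $u\in l^2_k\setminus\{0\}$, the real-valued map
\[
f(t) := I_k(tu) = A\,t^2 - B\,t^4 - C\,t^{2\sigma+2},
\]
with
\[
A = \langle -\triangle_{d}u,u\rangle_{k}-\omega\langle u,u\rangle_{k},\quad B = \alpha\sum_{l\in P_{k}}|u_{l}|^{2}(|u_{l+1}|^{2}+|u_{l-1}|^{2}),\quad C = \beta\sum_{l\in P_{k}}|u_{l}|^{2\sigma+2},
\]
is a polynomial in $t$ with no constant or linear term. Whenever $A>0$ and at least one of $B,C$ is positive, $f$ satisfies $f(0)=0$, $f(t)\sim A\,t^{2}>0$ as $t\to 0^{+}$, and $f(t)\to -\infty$ as $t\to+\infty$ (because $B,C\ge 0$ and the highest-degree term has a strictly negative coefficient). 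The intermediate value theorem then yields some $t_{*}>0$ with $f(t_{*})=0$, which gives $t_{*}u\in\mathcal{N}_{k}$.

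Thus the whole argument reduces to exhibiting one $u$ for which $A(u)>0$. The operator bound $0\le\langle\triangle_{d}u,u\rangle_{k}\le 4\|u\|_{l^{2}_{k}}^{2}$ forces $A>0$ for \emph{every} $u\neq 0$ only when $\omega<-4$, so to cover the full range $\omega<0$ I would simply take the constant sequence $u_{l}\equiv 1$ on $P_{k}$, extended $k$-periodically. Such a sequence is annihilated by $\triangle_{d}$, hence $\langle -\triangle_{d}u,u\rangle_{k}=0$ and $A=-\omega\,k>0$. The hypotheses $\alpha,\beta>0$ simultaneously give $B=2\alpha k>0$ and $C=\beta k>0$, so the scaling step above applies and produces the desired $t_{*}u\in\mathcal{N}_{k}$.

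I do not foresee any essential obstacle: once the constant test sequence has been identified, the claim collapses to a one-variable calculus observation on the polynomial $f(t)$. The only subtlety worth flagging is the temptation to try to bound the quadratic term uniformly in $u$, which fails in the regime $-4\le\omega<0$; choosing a specific $u$ that is annihilated by $\triangle_{d}$ sidesteps this issue entirely.
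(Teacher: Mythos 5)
Your proof is correct, and it follows essentially the same route as the paper: fix a test sequence, observe that $I_k(tu)=At^2-Bt^4-Ct^{2\sigma+2}$ is positive for small $t$ and tends to $-\infty$, and conclude by the intermediate value theorem that some rescaling lands on $\mathcal{N}_k$. The paper phrases this with $\rho(t)=I_k(\sqrt{t}u)$ and additionally notes $\rho''<0$ to get uniqueness of the zero (which is reused in Lemma~\ref{lem2.2}), but for nonemptiness your IVT argument suffices.

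The one point to correct is your claim that $A>0$ for every $u\neq 0$ ``only when $\omega<-4$,'' which is what drives your detour through the constant sequence. In fact $-\triangle_d$ is positive semi-definite: for $d=1$ one has $\langle-\triangle_d u,u\rangle_k=\sum_{l\in P_k}|u_{l+1}-u_l|^2\geq 0$, so $A=\langle-\triangle_d u,u\rangle_k-\omega\langle u,u\rangle_k\geq|\omega|\,\|u\|_{l^2_k}^2>0$ for \emph{every} $u\neq 0$ whenever $\omega<0$. The paper's displayed inequality is a (typographically garbled) statement of $0\leq\langle-\triangle_d u,u\rangle_k\leq 4\|u\|_{l^2_k}^2$, and this sign convention is used throughout (e.g.\ in \eqref{2.6}). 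Your choice of the constant sequence is harmless --- it does produce a valid element of $\mathcal{N}_k$, so the proof stands --- but the restriction is unnecessary, and the general statement (every nonzero $u$ can be rescaled onto $\mathcal{N}_k$) is what the paper actually needs later, for instance in the density arguments of Section~3.
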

\begin{proof}
For $t\geq 0$ and $u\neq 0$, define
\begin{align*}
\rho(t)&=I_{k}(\sqrt{t}u)\\
&=t(\langle-\triangle_{d}u,u\rangle_{k}-\omega\langle u,u\rangle_{k})-2t^{2}\alpha\sum_{l\in P_{k}}|u_{l}|^{2}|u_{l+1}|^{2}-t^{\sigma+1}\beta\sum_{l\in P_{k}}|u_{l}|^{2\sigma+2}.
\end{align*}
Then,
\begin{align*}
\rho'(t)=\langle-\triangle_{d}u,u\rangle_{k}-\omega\langle u,u\rangle_{k}-4t\alpha\sum_{l\in P_{k}}|u_{l}|^{2}|u_{l+1}|^{2}-(\sigma+1)t^{\sigma}\beta\sum_{l\in P_{k}}|u_{l}|^{2\sigma+2}.
\end{align*}
There holds that $\rho'(t)>0$ for $t>0$ small enough.

Observe that
\begin{align*}
\rho''(t)=-4\alpha\sum_{l\in P_{k}}|u_{l}|^{2}|u_{l+1}|^{2}-(\sigma+1)\sigma t^{\sigma-1}\beta\sum_{l\in P_{k}}|u_{l}|^{2\sigma+2}<0.
\end{align*}
Therefore, $\rho(t)$ admits a
unique zero point  $t^{*}\in (0,+\infty)$. This implies $\sqrt{t^{*}}u\in \mathcal{N}_{k}$. It completes the proof.
\end{proof}

\begin{lemma}\label{lem2.2}
Under the assumptions of Theorem \ref{th2.1}. For $u\in\mathcal{N}_{k}$, the function $J_{k}(\sqrt{t}u)$ has a unique critical point at $t=1$, which is a global maximum.
\end{lemma}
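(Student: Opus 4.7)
The plan is to reduce the analysis of $J_{k}(\sqrt{t}u)$ on $(0,\infty)$ to the sign analysis of $\rho(t)=I_{k}(\sqrt{t}u)$ already carried out in Lemma \ref{lem2.1}. The key observation that drives everything is a direct identity between the variational derivative of $J_{k}(\sqrt{t}u)$ in $t$ and the Nehari functional $I_{k}(\sqrt{t}u)$.

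First, I would write out
\begin{align*}
J_{k}(\sqrt{t}u)=tA-t^{2}B-\tfrac{t^{\sigma+1}}{\sigma+1}\tilde{C},
\end{align*}
where $A=\langle-\triangle_{d}u,u\rangle_{k}-\omega\langle u,u\rangle_{k}$, $B=\alpha\sum_{l\in P_{k}}|u_{l}|^{2}|u_{l+1}|^{2}$, and $\tilde{C}=\beta\sum_{l\in P_{k}}|u_{l}|^{2\sigma+2}$ (using $k$-periodicity to fold the $|u_{l-1}|^{2}$ sum into the $|u_{l+1}|^{2}$ one in the definition of $I_{k}$). A direct differentiation then gives, for $t>0$,
\begin{align*}
\tfrac{d}{dt}J_{k}(\sqrt{t}u)=A-2tB-t^{\sigma}\tilde{C}=\tfrac{1}{t}\,I_{k}(\sqrt{t}u)=\tfrac{\rho(t)}{t}.
\end{align*}
This is the structural identity that makes the lemma essentially a corollary of Lemma \ref{lem2.1}.

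Next, for uniqueness of the critical point: the critical points of $t\mapsto J_{k}(\sqrt{t}u)$ on $(0,\infty)$ are exactly the positive zeros of $\rho$. Since $u\in\mathcal{N}_{k}$ gives $\rho(1)=I_{k}(u)=0$, and Lemma \ref{lem2.1} asserts $\rho$ has a unique zero $t^{*}\in(0,\infty)$, we must have $t^{*}=1$.

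Finally, to upgrade $t=1$ to a global maximum, I would use the sign information for $\rho$ from Lemma \ref{lem2.1}: $\rho(0)=0$, $\rho'(0)=A>0$ (here $\omega<0$ together with $\langle-\triangle_{d}u,u\rangle_{k}\geq 0$ and $u\neq 0$ force $A>0$), and $\rho''(t)<0$ so $\rho$ is strictly concave on $(0,\infty)$. Combined with the unique zero at $t=1$, this forces $\rho(t)>0$ on $(0,1)$ and $\rho(t)<0$ on $(1,\infty)$. Dividing by $t$ transfers the same sign pattern to $\tfrac{d}{dt}J_{k}(\sqrt{t}u)$, so $J_{k}(\sqrt{t}u)$ is strictly increasing on $(0,1)$ and strictly decreasing on $(1,\infty)$, proving that $t=1$ is the unique critical point and the global maximum. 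There is no real obstacle here; the only point requiring a short justification is $A>0$, which is immediate from the sign of $\omega$.
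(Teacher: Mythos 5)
Your proposal is correct and follows essentially the same route as the paper: both expand $J_{k}(\sqrt{t}u)=tA-t^{2}B-\frac{t^{\sigma+1}}{\sigma+1}\tilde{C}$ and deduce the result from the strict monotonicity of the derivative $A-2tB-t^{\sigma}\tilde{C}$ together with $I_{k}(u)=0$. The only cosmetic difference is that you package this as the identity $\frac{d}{dt}J_{k}(\sqrt{t}u)=I_{k}(\sqrt{t}u)/t$ and import the sign analysis of $\rho$ from Lemma \ref{lem2.1}, whereas the paper recomputes $\theta''<0$ directly; both are valid and of equal weight.
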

\begin{proof}
For $t>0$ and $u\in\mathcal{N}_{k}$, we get
\begin{align*}
\theta(t)&=J_{k}(\sqrt{t}u)\\
&=t(\langle-\triangle_{d}u,u\rangle_{k}-\omega\langle u,u\rangle_{k})-t^{2}\alpha\sum_{l\in P_{k}}|u_{l}|^{2}|u_{l+1}|^{2}-\frac{t^{\sigma+1}\beta}{\sigma+1}\sum_{l\in P_{k}}|u_{l}|^{2\sigma+2}.
\end{align*}
Then,
\begin{align*}
\theta'(t)=\langle-\triangle_{d}u,u\rangle_{k}-\omega\langle u,u\rangle_{k}-2t\alpha\sum_{l\in P_{k}}|u_{l}|^{2}|u_{l+1}|^{2}-t^{\sigma}\beta\sum_{l\in P_{k}}|u_{l}|^{2\sigma+2}.
\end{align*}
It holds that $\theta'(t)>0$ for $t>0$ small enough.

Note that
\begin{align*}
\theta''(t)=-2\alpha\sum_{l\in P_{k}}|u_{l}|^{2}|u_{l+1}|^{2}-\sigma t^{\sigma-1}\beta\sum_{l\in P_{k}}|u_{l}|^{2\sigma+2}<0.
\end{align*}
We can see that $t=1$ is the unique maximum point of $\theta(t)$. This implies the proof.
\end{proof}

Assume that $u^{k}$ is the $k$-periodic solution of \eqref{2.3}, we have
\begin{align}\label{2.6}
&|\omega|||u^{k}||^{2}_{l^{2}_{k}}\nonumber\\
\leq &\langle-\triangle_{d}u^{k},u^{k}\rangle_{k}-\omega\langle u^{k},u^{k}\rangle_{k}\nonumber\\
=&2\alpha\sum_{l\in P_{k}}|u^{k}_{l}|^{2}|u^{k}_{l+1}|^{2}+\beta\sum_{l\in P_{k}}|u^{k}_{l}|^{2\sigma+2}\nonumber\\
\leq&||u^{k}||^{2}_{l^{2}_{k}}\Big(\beta||u^{k}||_{l^{2}_{k}}^{2\sigma}+2\alpha||u^{k}||^{2}_{l^{2}_{k}}\Big).
\end{align}
Therefore
\begin{align}\label{2.7}
||u^{k}||_{l^{2}_{k}}\geq C_{1}>0.
\end{align}
where $C_{1}$ is the unique positive solution of equation:
$$
\beta x^{2\sigma}+2\alpha x^{2}+\omega=0.
$$
Observe that $C_{1}$ is independent of $k$.

Thus, we get a lower bound of the power of the periodic solutions.
\begin{theorem}
The power of the periodic solution must be greater than $C_{1}$.
\end{theorem}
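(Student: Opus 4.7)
The strategy is to test the stationary equation \eqref{2.3} against the solution $u^{k}$ itself, use non-negativity of the discrete Laplacian quadratic form, and reduce everything to a scalar polynomial inequality whose unique positive root supplies $C_{1}$.

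First I would pair \eqref{2.3} with $u^{k}$ in $l_{k}^{2}$ and exploit $k$-periodicity to identify $\sum_{l\in P_{k}}|u_{l}^{k}|^{2}|u_{l-1}^{k}|^{2}$ with $\sum_{l\in P_{k}}|u_{l}^{k}|^{2}|u_{l+1}^{k}|^{2}$ via a single index shift. This yields
\begin{align*}
\langle -\triangle_{d}u^{k},u^{k}\rangle_{k}-\omega\,\|u^{k}\|_{l^{2}_{k}}^{2}=2\alpha\sum_{l\in P_{k}}|u^{k}_{l}|^{2}|u^{k}_{l+1}|^{2}+\beta\sum_{l\in P_{k}}|u^{k}_{l}|^{2\sigma+2}.
\end{align*}
Since $\langle -\triangle_{d}u,u\rangle_{k}\geq 0$, the left-hand side is at least $|\omega|\,\|u^{k}\|_{l^{2}_{k}}^{2}$.

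Next I would dominate the two nonlinear sums by powers of $\|u^{k}\|_{l^{2}_{k}}$. The embedding $\|u\|_{l^{q}_{k}}\leq \|u\|_{l^{p}_{k}}$ for $1\leq p\leq q\leq \infty$, recorded earlier in the paper, bounds the pure-power term by $\|u^{k}\|_{l^{2}_{k}}^{2\sigma+2}$. For the hopping term I would apply Cauchy--Schwarz in $l^{2}_{k}$ followed by the same embedding:
\begin{align*}
\sum_{l\in P_{k}}|u^{k}_{l}|^{2}|u^{k}_{l+1}|^{2}\leq \|u^{k}\|_{l^{4}_{k}}^{4}\leq \|u^{k}\|_{l^{2}_{k}}^{4}.
\end{align*}
Dividing by $\|u^{k}\|_{l^{2}_{k}}^{2}>0$ produces the scalar inequality
\begin{align*}
|\omega|\leq \beta\,\|u^{k}\|_{l^{2}_{k}}^{2\sigma}+2\alpha\,\|u^{k}\|_{l^{2}_{k}}^{2}.
\end{align*}

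Finally, I would analyze the auxiliary function $f(x)=\beta x^{2\sigma}+2\alpha x^{2}+\omega$ on $[0,\infty)$: under the assumptions $\alpha,\beta>0$, $\omega<0$ and $\sigma\geq 1$, $f$ is strictly increasing with $f(0)=\omega<0$ and $f(x)\to \infty$, so it admits a unique positive zero $C_{1}$. The scalar estimate above is precisely $f(\|u^{k}\|_{l^{2}_{k}})\geq 0$, which forces $\|u^{k}\|_{l^{2}_{k}}\geq C_{1}$ and concludes the proof.

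There is no serious obstacle here; the argument is essentially a one-line testing computation combined with a monotonicity fact. The only subtle point worth highlighting is that the embedding constant between $l^{q}_{k}$ and $l^{p}_{k}$ equals $1$ uniformly in the period $k$, so the threshold $C_{1}$ is itself independent of $k$. This $k$-uniformity is exactly what will be needed in Section~3 when passing to the limit $k\to\infty$ to produce a solitary ground state.
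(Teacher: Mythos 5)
Your argument is correct and coincides with the paper's own proof: both test \eqref{2.3} against $u^{k}$, drop the nonnegative quadratic form $\langle-\triangle_{d}u^{k},u^{k}\rangle_{k}$, bound the nonlinear sums by $\|u^{k}\|_{l^{2}_{k}}^{4}$ and $\|u^{k}\|_{l^{2}_{k}}^{2\sigma+2}$ via the $l^{p}_{k}$ embeddings, and identify $C_{1}$ as the unique positive root of $\beta x^{2\sigma}+2\alpha x^{2}+\omega=0$, independent of $k$. The only cosmetic difference is that you spell out the Cauchy--Schwarz step and the monotonicity of the auxiliary polynomial, which the paper leaves implicit.
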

\begin{lemma}
Under the assumptions of Theorem \ref{th2.1}.
$J_{k}(u)$ is bounded below for all $u\in \mathcal{N}_{k}$.
\end{lemma}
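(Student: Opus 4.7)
The plan is to use the Nehari constraint to rewrite $J_k(u)$ as a combination of the purely nonlinear terms, which are manifestly nonnegative under our sign hypotheses. First I would observe the elementary identity
\begin{equation*}
\sum_{l\in P_k}|u_l|^2|u_{l-1}|^2=\sum_{l\in P_k}|u_l|^2|u_{l+1}|^2,
\end{equation*}
which follows from periodicity by a shift of the summation index. Hence the constraint $I_k(u)=0$ defining $\mathcal{N}_k$ reduces to
\begin{equation*}
\langle-\triangle_d u,u\rangle_k-\omega\langle u,u\rangle_k=2\alpha\sum_{l\in P_k}|u_l|^2|u_{l+1}|^2+\beta\sum_{l\in P_k}|u_l|^{2\sigma+2}.
\end{equation*}

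Next I would substitute this identity into the definition of $J_k(u)$, canceling the quadratic block against the corresponding pieces of the nonlinearity, to obtain
\begin{equation*}
J_k(u)=\alpha\sum_{l\in P_k}|u_l|^2|u_{l+1}|^2+\frac{\sigma\beta}{\sigma+1}\sum_{l\in P_k}|u_l|^{2\sigma+2}\qquad\text{for all }u\in\mathcal{N}_k.
\end{equation*}
Since $\alpha,\beta>0$ and $\sigma\geq 1$, each summand on the right is nonnegative, so $J_k(u)\geq 0$ for every $u\in\mathcal{N}_k$. In particular $m_k\geq 0$, which is the desired lower bound.

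There is really no obstacle in this argument; it is a routine Nehari-type computation. The only minor care needed is the index-shift identity noted above, to collapse the $l\pm 1$ terms into a single quartic sum so that the coefficients match cleanly between $I_k$ and $J_k$. One could also note in passing that the lower bound $0$ is actually attained only at the trivial function, which is excluded from $\mathcal{N}_k$, so any minimizer must satisfy $m_k>0$; this observation will be useful later when the nontriviality of periodic ground states is invoked.
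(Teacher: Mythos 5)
Your proof is correct and rests on the same key computation as the paper's: using the Nehari constraint, together with the periodic index-shift identity, to rewrite $J_{k}(u)=\alpha\sum_{l\in P_{k}}|u_{l}|^{2}|u_{l+1}|^{2}+\frac{\sigma\beta}{\sigma+1}\sum_{l\in P_{k}}|u_{l}|^{2\sigma+2}$ on $\mathcal{N}_{k}$, which is manifestly nonnegative. The only difference is that the paper pushes one step further: the argument of \eqref{2.6}--\eqref{2.7} (which uses only $I_{k}(u)=0$, hence applies to every $u\in\mathcal{N}_{k}$) forces some $|u_{l_{0}}|>C_{2}$ with $C_{2}$ independent of $k$, giving the strictly positive uniform bound $J_{k}(u)>\frac{\sigma\beta}{\sigma+1}C_{2}^{2\sigma+2}$, whereas your closing remark only yields $m_{k}>0$ conditionally on the infimum being attained; for the literal statement of the lemma, your bound $J_{k}(u)\geq 0$ is already sufficient.
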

\begin{proof}
Let $u\in \mathcal{N}_{k}$. From the argument in \eqref{2.6} and \eqref{2.7}, there exists $l_{0}\in P_{k}$ and a positive constant $C_{2}$ such that
$$
|u_{l_{0}}|>C_{2}>0.
$$
Therefore,
\begin{align*}
J_{k}(u)=\alpha\sum_{P_{k}}|u_{l}|^{2}|u_{l-1}|^{2}+\frac{\sigma\beta}{\sigma+1}\sum_{P_{k}}|u_{l}|^{2\sigma+2}>\frac{\sigma\beta}{\sigma+1}C_{2}^{2\sigma+2}.
\end{align*}
It completes the proof.
\end{proof}

\begin{lemma}
Under the assumptions of Theorem \ref{th2.1}. Then, the minimizer of the constrained variational problem
$m_{k}=\inf_{u\in \mathcal{N}_{k}}\{J_{k}(u)\}$ could be attained.
\end{lemma}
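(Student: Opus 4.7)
The plan is to exploit the fact that $l^{2}_{k}$ is finite-dimensional (of dimension $k$): once a minimizing sequence is shown to be bounded, Bolzano--Weierstrass supplies a convergent subsequence, and only two things can go wrong — the limit might be $0$ (escaping $\mathcal{N}_{k}$) or might fail the constraint $I_{k}=0$. Both are ruled out cheaply here.

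First, pick a minimizing sequence $\{u^{n}\}\subset\mathcal{N}_{k}$ with $J_{k}(u^{n})\to m_{k}$. On $\mathcal{N}_{k}$ the quadratic part can be eliminated using $I_{k}(u)=0$ to give
\begin{align*}
J_{k}(u)=\alpha\sum_{l\in P_{k}}|u_{l}|^{2}|u_{l+1}|^{2}+\frac{\sigma\beta}{\sigma+1}\sum_{l\in P_{k}}|u_{l}|^{2\sigma+2},
\end{align*}
exactly as in the previous lemma. Hence boundedness of $J_{k}(u^{n})$ forces $\|u^{n}\|_{l^{2\sigma+2}_{k}}$ to be bounded; since all norms on the finite-dimensional space $l^{2}_{k}$ are equivalent, $\|u^{n}\|_{l^{2}_{k}}$ is bounded as well.

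Next, extract a subsequence with $u^{n}\to u^{*}$ coordinatewise (equivalently, in $l^{2}_{k}$). I would then observe that the chain of inequalities \eqref{2.6}--\eqref{2.7} relied only on $\omega<0$ and on the identity $I_{k}(u)=0$, so in fact \emph{every} $u\in\mathcal{N}_{k}$ enjoys $\|u\|_{l^{2}_{k}}\geq C_{1}>0$ with the same $C_{1}$. Passing to the limit gives $\|u^{*}\|_{l^{2}_{k}}\geq C_{1}>0$, so $u^{*}\neq 0$. Since $I_{k}$ and $J_{k}$ are polynomials in the finitely many coordinates $(u_{l})_{l\in P_{k}}$, they are continuous, whence $I_{k}(u^{*})=\lim I_{k}(u^{n})=0$ and $J_{k}(u^{*})=\lim J_{k}(u^{n})=m_{k}$. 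In particular $u^{*}\in\mathcal{N}_{k}$ and the infimum is attained at $u^{*}$.

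The only place where an obstruction could in principle appear is the risk that the minimizing sequence slips off $\mathcal{N}_{k}$ by converging to $0$ — this is the same phenomenon that makes the analogous minimization over all of $l^{2}(\mathbb{Z})$ genuinely delicate in the next section. In the $k$-periodic setting it is killed for free by the uniform lower bound on $\|u\|_{l^{2}_{k}}$ over $\mathcal{N}_{k}$, so no concentration--compactness machinery is needed at this stage.
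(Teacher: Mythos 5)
Your proof is correct and follows essentially the same route as the paper: bound the minimizing sequence via the reduced form $J_{k}(u)=\alpha\sum|u_{l}|^{2}|u_{l+1}|^{2}+\tfrac{\sigma\beta}{\sigma+1}\sum|u_{l}|^{2\sigma+2}$ on $\mathcal{N}_{k}$, extract a convergent subsequence by finite-dimensionality, and pass to the limit by continuity of $I_{k}$ and $J_{k}$. You are in fact slightly more careful than the paper, which never explicitly rules out that the limit is zero; your observation that the estimate \eqref{2.6}--\eqref{2.7} holds for every $u\in\mathcal{N}_{k}$ and hence gives $\|u^{*}\|_{l^{2}_{k}}\geq C_{1}>0$ closes that small gap.
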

\begin{proof}
Assume that $\{u^{n}\}$ is a minimizing sequence.
We can see that there exists a  constant $M>0$ such that
$$
\max J_{k}(u^{n})\leq M.
$$
Thus,
$$
|\omega|||u^{n}||_{l^{2}_{k}}\leq \langle-\triangle u^{n},u^{n}\rangle_{k}-\omega\langle u^{n},u^{n}\rangle_{k}\leq M.
$$
There holds that $||u^{n}||_{l^{\infty}_{k}}$ is bounded.

Note that $P_{k}$ is finite dimensional space. Passing to a subsequence, there exists $u^{k}$ such that $u^{n_{j}}\rightarrow u^{k}$ in $l^{2}_{k}$.
Since the set $l^{2}_{k}$ is closed and the functional $J_{k}$ is continuous,
we obtain that $u^{k}\in\mathcal{N}_{k}$ and $J_{k}(u^{k})=m_{k}$.
\end{proof}

By Lagrange multiplier method,
there exists some constant $\lambda$ such that
\begin{align*}
&\lambda\Bigg(2\langle-\triangle_{d}u^{k},v\rangle_{k}-2\omega\langle u^{k},v\rangle_{k}-4\alpha\sum_{l\in P_{k}}u^{k}_{l}v_{l}(|u^{k}_{l+1}|^{2}+|u^{k}_{l-1}|^{2})
-(2\sigma+2)\beta\sum_{l\in P_{k}}|u^{k}_{l}|^{2\sigma}u^{k}_{l}v_{l}\Bigg)\\
&+2\langle-\triangle_{d}u^{k},v\rangle_{k}-2\omega\langle u^{k},v\rangle_{k}-2\alpha\sum_{l\in P_{k}}u^{k}_{l}v_{l}(|u^{k}_{l+1}|^{2}+|u^{k}_{l-1}|^{2})-2\beta\sum_{l\in P_{k}}|u^{k}_{l}|^{2\sigma}u^{k}_{l}v_{l}
=0.
\end{align*}
Choose $v=u^{k}$. Note that $u^{k}\in \mathcal{N}_{k}$, there holds
\begin{align*}
\lambda\Bigg(-2\alpha\sum_{l\in P_{k}}|u^{k}_{l}|^{2}(|u^{k}_{l+1}|^{2}+|u^{k}_{l-1}|^{2})
-\sigma\beta\sum_{l\in P_{k}}|u^{k}_{l}|^{2\sigma+2}\Bigg)=0.
\end{align*}
We have $\lambda=0$. It implies that $u^{k}$ is a nontrival solution of equation \eqref{2.2}.

Now, we prove that $u^{k}$ is positive. Observe that
$$
\langle-\triangle_{d}|u|,|u|\rangle-\omega\langle |u|,|u|\rangle\leq \langle-\triangle_{d}u,u\rangle-\omega\langle u,u\rangle.
$$

Since that $u^{k}$ is the nontrival solution. Then, there exists $t^{**}\in(0,1]$ such that $\sqrt{t^{**}}|u^{k}|\in\mathcal{N}_{k}$. It is obvious that

$$
J_{k}(\sqrt{t^{**}}|u^{k}|)\leq m_{k}.
$$
Hence $J_{k}(\sqrt{t^{**}}|u^{k}|)= m_{k}.$ We can assume that $u^{k}=\sqrt{t^{**}}|u^{k}|$.

Let $G(n,m)$ be the Green function of $-\triangle_{d}-\omega$. From \cite{Tes}, we have $G(n,m)>0$ for $\omega<0$.
It obtains that
$$
u^{k}_{n}=\sum_{l\in\mathbb{Z}}G(n,l)\Big(\alpha u^{k}_{l}(|u^{k}_{l+1}|^{2}+|u^{k}_{l-1}|^{2})+\beta|u^{k}_{l}|^{2\sigma}u^{k}_{l}\Big),\quad n\in\mathbb{Z}.
$$
Since that $u^{k}$ is nonnegative, there holds  $u^{k}_{n}>0$ for all $n\in\mathbb{Z}$. It completes the proof of Theorem \ref{th2.1}.

\section{Localized ground state}

Here, we give some notations. Define the  functional
\begin{align*}
J(u)=\langle-\triangle_{d}u,u\rangle-\omega\langle u,u\rangle-\alpha\sum_{l\in \mathbb{Z}}|u_{l}|^{2}|u_{l+1}|^{2}-\frac{\beta}{\sigma+1}\sum_{l\in \mathbb{Z}}|u_{l}|^{2\sigma+2},
\end{align*}
and Nehari manifold
\begin{align*}
\mathcal{N}=\Big\{&u\in l^{2}|I(u)=\langle-\triangle_{d}u,u\rangle-\omega\langle u,u\rangle-\alpha\sum_{l\in \mathbb{Z}}|u_{l}|^{2}(|u_{l+1}|^{2}+|u_{l-1}|^{2})\nonumber\\
&-\beta\sum_{l\in \mathbb{Z}}|u_{l}|^{2\sigma+2}=0,u\neq 0\Big\},
\end{align*}
where $\langle\cdot,\cdot\rangle$ is natural inner product in $l^{2}$.
Thus, we can see that the minimizer of the constrained variational problem:
\begin{align*}
m=\inf_{u\in \mathcal{N}}\{J(u)\}
\end{align*}
is the nontrivial solitary wave of $\eqref{2.3}$. We call this minimizer   a localized ground state.  Similar with Lemma \ref{lem2.1} and \ref{lem2.2}, the results are obtained by replacing $J_{k}(u)$, $I_{k}(u)$ to $J(u)$, $I(u)$.

In this section, to obtain the localized ground state $u$ satisfying
$$
\lim_{l\rightarrow\infty}|u_{l}|=0,
$$
we follow the idea of \cite{Pan1}. We want to pass to the limit as $k\rightarrow\infty$. The key point  is  the following result.

\begin{lemma}\label{lem3.1}
Under the assumptions of Theorem \ref{th2.1}. Let $u^{k}$ be the $k$-periodic solution. Therefore, the sequences $m_{k}$ and $||u^{k}||_{l^{2}_{k}}$ are bounded.
\end{lemma}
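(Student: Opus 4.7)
The plan is to bound $m_{k}$ first by constructing a fixed, compactly supported test function, and then deduce the uniform $l^{2}_{k}$ bound on $u^{k}$ from the Nehari identity $I_{k}(u^{k})=0$ combined with the elementary estimate $|\omega|\|u^{k}\|_{l^{2}_{k}}^{2}\leq \langle -\triangle_{d}u^{k},u^{k}\rangle_{k}-\omega\langle u^{k},u^{k}\rangle_{k}$.

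First, I would fix a nontrivial $\phi\in l^{2}$ with compact support contained in, say, $\{-N,\dots,N\}$ (for instance $\phi_{0}=1$ and $\phi_{l}=0$ otherwise). For every $k>2N+1$, reinterpret $\phi$ as an element $\tilde\phi\in l^{2}_{k}$ by placing its support inside $P_{k}$ and extending periodically; the separation of the periodic copies guarantees that all four sums defining $J_{k}(\tilde\phi)$ and $I_{k}(\tilde\phi)$ coincide with the corresponding sums over $\mathbb{Z}$, and hence are constants independent of $k$. Denoting
\begin{equation*}
A=\langle -\triangle_{d}\tilde\phi,\tilde\phi\rangle_{k}-\omega\langle \tilde\phi,\tilde\phi\rangle_{k},\qquad B=\alpha\sum|\tilde\phi_{l}|^{2}|\tilde\phi_{l+1}|^{2},\qquad C=\beta\sum|\tilde\phi_{l}|^{2\sigma+2},
\end{equation*}
the function $\rho(t)=tA-2t^{2}B-t^{\sigma+1}C$ from the proof of Lemma \ref{lem2.1} has a unique positive zero $t^{*}$, and this $t^{*}$ is also independent of $k$ (for $k>2N+1$). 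Then $\sqrt{t^{*}}\,\tilde\phi\in\mathcal{N}_{k}$, so
\begin{equation*}
m_{k}\leq J_{k}\bigl(\sqrt{t^{*}}\,\tilde\phi\bigr)=t^{*}A-(t^{*})^{2}B-\tfrac{(t^{*})^{\sigma+1}}{\sigma+1}C,
\end{equation*}
which is a fixed finite constant. This yields the uniform upper bound on $m_{k}$.

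For the second half, I would exploit that $u^{k}\in\mathcal{N}_{k}$, so $I_{k}(u^{k})=0$ gives
\begin{equation*}
\langle -\triangle_{d}u^{k},u^{k}\rangle_{k}-\omega\langle u^{k},u^{k}\rangle_{k}=2\alpha\sum_{l\in P_{k}}|u^{k}_{l}|^{2}|u^{k}_{l+1}|^{2}+\beta\sum_{l\in P_{k}}|u^{k}_{l}|^{2\sigma+2}.
\end{equation*}
Substituting back into $J_{k}$ gives the identity already used in an earlier lemma, namely
\begin{equation*}
m_{k}=J_{k}(u^{k})=\alpha\sum_{l\in P_{k}}|u^{k}_{l}|^{2}|u^{k}_{l+1}|^{2}+\frac{\sigma\beta}{\sigma+1}\sum_{l\in P_{k}}|u^{k}_{l}|^{2\sigma+2},
\end{equation*}
so both nonlinear sums are controlled separately by multiples of $m_{k}$. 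Plugging these into the identity and using $\omega<0$,
\begin{equation*}
|\omega|\,\|u^{k}\|_{l^{2}_{k}}^{2}\leq 2\alpha\sum|u^{k}_{l}|^{2}|u^{k}_{l+1}|^{2}+\beta\sum|u^{k}_{l}|^{2\sigma+2}\leq 2m_{k}+\frac{\sigma+1}{\sigma}\,m_{k}=\frac{3\sigma+1}{\sigma}\,m_{k},
\end{equation*}
from which the uniform bound on $\|u^{k}\|_{l^{2}_{k}}$ follows immediately.

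The only delicate step is the first one: one must ensure the test function is genuinely independent of $k$ and that its periodization does not produce overlapping copies that would make the nonlinear terms (and hence $t^{*}$) depend on $k$. Choosing $\phi$ of fixed compact support and restricting to $k$ large enough that one fundamental domain contains the support with room to spare for the nearest-neighbor interaction takes care of this. Once that is set, the remainder is algebraic bookkeeping based on the Nehari identity and the sign of $\omega$.
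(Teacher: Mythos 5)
Your proposal is correct, and the second half takes a genuinely different and notably simpler route than the paper. For the bound on $m_{k}$ you do essentially what the paper does: produce an element of $\mathcal{N}_{k}$ supported in a fixed finite window so that $J_{k}$ of it is independent of $k$ (the paper reaches such a test function via a density argument in $l^{2}$; your explicit choice $\phi_{0}=1$ is cleaner, and you correctly note that $B=0$ causes no problem since $C=\beta>0$ still forces a unique positive zero $t^{*}$ of $\rho$). For the bound on $\|u^{k}\|_{l^{2}_{k}}$, however, the paper argues by contradiction: it normalizes $v^{k}=u^{k}/\|u^{k}\|_{l^{2}_{k}}$ and runs a vanishing/non-vanishing concentration-compactness dichotomy, ruling out vanishing via H\"older estimates and the uniform $l^{2\sigma+2}$ bound, and ruling out non-vanishing via pointwise limits. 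You instead combine $I_{k}(u^{k})=0$ with the identity
\begin{equation*}
m_{k}=J_{k}(u^{k})=\alpha\sum_{l\in P_{k}}|u^{k}_{l}|^{2}|u^{k}_{l+1}|^{2}+\frac{\sigma\beta}{\sigma+1}\sum_{l\in P_{k}}|u^{k}_{l}|^{2\sigma+2}
\end{equation*}
(which the paper itself derives and uses inside its contradiction argument) to bound each nonlinear sum by a multiple of $m_{k}$, yielding $|\omega|\|u^{k}\|_{l^{2}_{k}}^{2}\leq \frac{3\sigma+1}{\sigma}m_{k}$ directly; the re-indexing $\sum_{l\in P_{k}}|u_{l}|^{2}|u_{l-1}|^{2}=\sum_{l\in P_{k}}|u_{l}|^{2}|u_{l+1}|^{2}$ that this uses is valid by periodicity. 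Your argument is a two-line algebraic deduction where the paper spends a page; it buys simplicity at no cost, since the later sections only use the statement of the lemma, not the dichotomy machinery from its proof.
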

\begin{proof}
First, we concern the sequences $m_{k}$ are bounded. From the similar argument of Lemma \ref{lem2.1}, there holds that
for any given $u\in l^{2}$, there exists $t'$ such $I(\sqrt{t'}u)<0$. Since the sequences with finite support are dense in $l^{2}$. Therefore, there exists $\tilde{u}$ with finite support such that
$I(\tilde{u})<0$. It obtains that there exists $t''$ such that $I(\sqrt{t''}\tilde{u})=0$. For $k$ large enough, we have $\mathrm{supp}\sqrt{t''}\tilde{u}\subset P_{k}$. We can get
$\tilde{v}^{k}\in l^{2}_{k}$ such that $\tilde{v}^{k}_{l}=\sqrt{t''}\tilde{u}_{l}$ for $l\in P_{k}$. There holds that $I_{k}(\tilde{v}^{k})=I(\sqrt{t''}\tilde{u}_{l})=0$. And $m_{k}\leq J_{k}(\tilde{v}^{k})= J(\sqrt{t''}\tilde{u})$ is bounded.

Second, we prove that $||u^{k}||_{l^{2}_{k}}$ is uniformly bounded. Assume that $||u^{k}||_{l^{2}_{k}}$ is  unbounded. Passing to a subsequence which is still denoted by itself, we have $||u^{k}||_{l^{2}_{k}}\rightarrow\infty$ for $k\rightarrow\infty$. Let $v^{k}=\frac{u^{k}}{||u^{k}||_{l^{2}_{k}}}$. One of the following should holds:

(i) $v^{k}$ is vanishing, i.e. $||v^{k}||_{l^{\infty}}\rightarrow 0$.

(ii) $v^{k}$ is not vanishing. Passing to a subsequence which is still denoted by itself, there exists $\delta>0$ and $b^{k}\in\mathbb{Z}$ such that $|v_{b^{k}}^{k}|>\delta$ for all $k$.

Now, we rule out the case (i). There holds that
\begin{align*}
0=\frac{I_{k}(u^{k})}{||u^{k}||_{l^{k}}^{2}}=\langle(-\triangle_{d}-\omega) v^{k},v^{k}\rangle_{k}-2\alpha\sum_{l\in P^{k}}|v^{k}_{l}|^{2}|u^{k}_{l-1}|^{2}-\beta\sum_{l\in P_{k}}|v^{k}_{l}|^{2}|u^{k}_{l}|^{2\sigma}.
\end{align*}
Hence,
\begin{align}\label{3.6}
|\omega|=|\omega|||v^{k}||_{l^{2}_{k}}\leq \langle(-\triangle_{d}-\omega) v^{k},v^{k}\rangle_{k}= 2\alpha\sum_{l\in P_{k}}|v^{k}_{l}|^{2}|u^{k}_{l-1}|^{2}+\beta\sum_{l\in P_{k}}|v^{k}_{l}|^{2}|u^{k}_{l}|^{2\sigma}.
\end{align}
Assume that
\begin{align*}
A_{k}&=\Big\{l\in P_{k}\Big||u_{l}^{k}|<M_{0}\Big\},\\
B_{k}&=P_{k}\setminus A_{k},
\end{align*}
where $M_{0}>0$ is a constant which is defined below.

Let $M_{0}$ be small enough such that
\begin{align*}
&2\alpha\sum_{l\in A_{k}}|v^{k}_{l}|^{2}|u^{k}_{l-1}|^{2}+\beta\sum_{l\in A_{k}}|v^{k}_{l}|^{2}|u^{k}_{l}|^{2\sigma}\\
< &2\alpha M_{0}^{2}\sum_{l\in A_{k}}|v^{k}_{l+1}|^{2}+\beta M_{0}^{2\sigma}\sum_{l\in A_{k}}|v^{k}_{l}|^{2}\\
\leq &\frac{|\omega|}{2}.
\end{align*}
Combine with the equation \eqref{3.6}, we have
\begin{align}\label{3.7}
\frac{|\omega|}{2}&\leq\liminf_{k\rightarrow\infty}2\alpha\sum_{l\in B_{k}}|v^{k}_{l+1}|^{2}|u^{k}_{l}|^{2}+\beta\sum_{l\in B_{k}}|v^{k}_{l}|^{2}|u^{k}_{l}|^{2\sigma}.
\end{align}
From the arguement above, there exists a constant $M'>0$ such that
\begin{align*}
m_{k}=\sum_{l\in P_{k}}\alpha|u^{k}_{l}|^{2}|u^{k}_{l+1}|^{2}+\frac{\sigma\beta}{\sigma+1}\sum_{l\in P_{k}}|u^{k}_{l}|^{2\sigma+2}<M'.
\end{align*}
Hence, $||u^{k}||_{l^{2\sigma+2}_{k}}$ is uniformly bounded.

By H\"{o}lder's inequality, we have
\begin{align*}
\sum_{n} |v_{n}|^{2}|u_{n}|^{2}\leq ||u||^{2}_{l^{2\sigma+2}}||v||^{2}_{l^{\frac{2\sigma+2}{\sigma}}},
\end{align*}
\begin{align*}
\sum_{n} |v_{n}|^{2}|u_{n}|^{2\sigma}\leq ||u||^{2\sigma}_{l^{2\sigma+2}}||v||^{2}_{l^{2\sigma+2}}
\end{align*}
and
$$
||v||_{l^{p}}\leq ||v||_{l^{\infty}}^{\frac{p-2}{p}}||v||_{l^{2}}^{\frac{2}{p}},\quad \mbox{for}~~~p>2.
$$
Since $v^{k}$ is vanishing, we can see that
$$
\lim_{k\rightarrow \infty}||v^{k}||_{l^{p}_{k}}=0,\quad \mbox{for}~~~p>2.
$$
It concludes that
$$
\liminf_{k\rightarrow\infty}2\alpha\sum_{B_{k}}|v^{k}_{l+1}|^{2}|u^{k}_{l}|^{2}+\beta\sum_{B_{k}}|v^{k}_{l}|^{2}|u^{k}_{l}|^{2\sigma}\rightarrow 0,\quad\mbox{as}~~k\rightarrow\infty.
$$
It contradicts with \eqref{3.7}.

Let's rule out the non-vanishing case. By the discrete translation invariance, we can assume that $b_{k}=0$.
Since $||v^{k}||_{l^{2}_{k}}=1$, there exists $v=\{v_{l}\}$ such that $v^{k}_{l}\rightarrow v_{l}$ for all $l\in \mathbb{Z}$. It is obvious that $v\in l^{2}$, $||v||_{l^{2}}\leq 1$ and $|v_{0}|\geq \delta$.

Since $|v_{0}|\neq 0$, then $|u^{k}_{0}|\rightarrow\infty$, as $k\rightarrow\infty$. On the other hand, we have
$$
\frac{\sigma\beta}{\sigma+1}|u^{k}_{0}|^{2\sigma+2}\leq m_{k}\leq M'.
$$
It is a contradiction.
\end{proof}

\begin{theorem}
Assume that the frequency $\omega<0$ and $\alpha,\beta>0$.
There exists a positive localized ground state $u$ for the equation $\eqref{2.3}$.
\end{theorem}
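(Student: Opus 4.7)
My plan is to obtain the localized ground state $u$ as a pointwise limit of suitable translates of the periodic ground states $u^{k}$ furnished by Theorem \ref{th2.1}. The key inputs are the uniform upper bounds $m_{k}\leq M'$ and $||u^{k}||_{l^{2}_{k}}\leq C$ from Lemma \ref{lem3.1}, together with the universal lower bound $||u||_{l^{\infty}}\geq C_{2}$ valid for every $u\in\mathcal{N}_{k}$ (which comes directly from \eqref{2.6}--\eqref{2.7}, since $|\omega|\,||u||_{l^{2}_{k}}^{2}\leq(2\alpha||u||_{l^{\infty}}^{2}+\beta||u||_{l^{\infty}}^{2\sigma})||u||_{l^{2}_{k}}^{2}$ with $C_{2}$ the unique positive root of $2\alpha x^{2}+\beta x^{2\sigma}=|\omega|$). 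In particular, each $u^{k}$ has some $b_{k}\in\mathbb{Z}$ with $|u^{k}_{b_{k}}|\geq C_{2}$, and by the discrete translation invariance of \eqref{2.3} I may replace $u^{k}$ by its shift and assume $b_{k}=0$, so that $|u^{k}_{0}|\geq C_{2}$ for all $k$.

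Next I extract a pointwise limit. Since $|u^{k}_{l}|\leq ||u^{k}||_{l^{2}_{k}}\leq C$ for each fixed $l\in\mathbb{Z}$, a Cantor diagonal argument produces $u=\{u_{l}\}_{l\in\mathbb{Z}}$ with $u^{k}_{l}\to u_{l}$ along a subsequence. Fatou on any finite window of $\mathbb{Z}$ (which for large $k$ lies inside $P_{k}$) gives $||u||_{l^{2}}^{2}\leq\liminf_{k}||u^{k}||_{l^{2}_{k}}^{2}<\infty$, so $u\in l^{2}$; nonnegativity is inherited from $u^{k}\geq 0$, and $u_{0}\geq C_{2}$ ensures $u\neq 0$. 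Each site of the difference equation \eqref{2.3} involves only three adjacent entries, so the pointwise limit preserves it: $u$ solves \eqref{2.3} as an element of $l^{2}$, and pairing with $u$ gives $I(u)=0$; hence $u\in\mathcal{N}$, and the definition of $m$ forces $J(u)\geq m$.

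For the reverse inequality I set up the sandwich
\[
m \;\leq\; J(u) \;\leq\; \liminf_{k\to\infty} J_{k}(u^{k}) \;=\; \liminf_{k\to\infty} m_{k} \;\leq\; \limsup_{k\to\infty} m_{k} \;\leq\; m.
\]
The second inequality uses Fatou applied to the nonnegative representation $J_{k}(u^{k})=\alpha\sum_{P_{k}}|u^{k}_{l}|^{2}|u^{k}_{l-1}|^{2}+\frac{\sigma\beta}{\sigma+1}\sum_{P_{k}}|u^{k}_{l}|^{2\sigma+2}$, which is valid because $u^{k}\in\mathcal{N}_{k}$. The last inequality is the mirror of the first half of Lemma \ref{lem3.1}: for any $w\in\mathcal{N}$ with $J(w)<m+\varepsilon$, I truncate $w$ to finite-support approximations $w^{(n)}\to w$ in $l^{2}\cap l^{2\sigma+2}$, apply the Nehari rescaling (the analogue of Lemma \ref{lem2.2} for $J$) to obtain $t_{n}>0$ with $\sqrt{t_{n}}w^{(n)}\in\mathcal{N}$ and $t_{n}\to 1$, and, for $k$ so large that $\mathrm{supp}\,w^{(n)}\subset P_{k}$ with a one-site margin, identify $\sqrt{t_{n}}w^{(n)}$ with an element of $\mathcal{N}_{k}$ carrying the same functional value (the periodic wrap-around coupling is then inactive). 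This yields $\limsup_{k} m_{k}\leq J(\sqrt{t_{n}}w^{(n)})\to J(w)<m+\varepsilon$; letting $\varepsilon\to 0$ closes the sandwich and gives $J(u)=m$.

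Finally, $u$ is a nonnegative $l^{2}$ solution of \eqref{2.3}, so the Green's-function representation $u_{n}=\sum_{l\in\mathbb{Z}}G(n,l)\bigl(\alpha u_{l}(|u_{l+1}|^{2}+|u_{l-1}|^{2})+\beta|u_{l}|^{2\sigma}u_{l}\bigr)$, together with $G(n,l)>0$ for $\omega<0$ as already invoked in Theorem \ref{th2.1}, forces $u_{n}>0$ for every $n$. The main obstacle in this scheme is the single delicate step of producing admissible test configurations in $\mathcal{N}_{k}$ from elements of $\mathcal{N}$: one has to place the truncation strictly inside $P_{k}$ so that the periodic coupling is silent, and control the Nehari rescaling $t_{n}\to 1$ along the truncations; once that is set up, everything else reduces to the diagonal extraction already sketched and the Green's-function positivity argument already used for the periodic problem.
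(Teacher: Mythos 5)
Your proof is correct and follows essentially the same route as the paper: pointwise limits of translated periodic ground states, a Fatou/finite-window argument giving $m\le J(u)\le\liminf_{k}m_{k}$, and finite-support Nehari test configurations embedded in $P_{k}$ to get $\limsup_{k}m_{k}\le m$. The only differences are cosmetic --- you obtain non-vanishing directly from the uniform $l^{\infty}$ lower bound $C_{2}$ rather than via the paper's vanishing/non-vanishing dichotomy, and you explicitly supply the Green's-function positivity step, which the paper proves only in Section 2 and silently omits from the Section 3 argument.
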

\begin{proof}
Let $u^{k}\in l^{2}_{k}$ be a periodic ground state. From Lemma \ref{lem3.1}, the sequence $||u^{k}||_{l^{2}_{k}}$ is bounded.
Therefore, $u^{k}$ is either vanishing or non-vanishing. In the case of vanishing, we have $\lim_{k\rightarrow\infty}||u^{k}||_{l^{p}_{k}}\rightarrow 0$, for $p>2$. There holds that
\begin{align*}
|\omega|||u^{k}||^{2}_{l^{2}_{k}}&\leq \langle(-\triangle_{d}-\omega) u^{k},u^{k}\rangle_{k}\\
&= 2\alpha\sum_{l\in P_{k}}|u^{k}_{l}|^{2}|u^{k}_{l-1}|^{2}+\beta\sum_{l\in P_{k}}|u^{k}_{l}|^{2\sigma+2}\\
&\leq 2\alpha\sum_{l\in P_{k}}|u^{k}_{l}|^{4}++\beta\sum_{l\in P_{k}}|u^{k}_{l}|^{2\sigma+2}\rightarrow 0.
\end{align*}
It is a contradiction.

Thus, the sequence $u^{k}$ is non-vanishing. By the discrete translation invariance, we assume that $|u^{k}_{0}|\geq \delta>0$. There exists  $u=\{u_{l}\}$ such that $u^{k}_{l}\rightarrow u_{l}$ for all $l\in \mathbb{Z}$. It is obvious that $u\in l^{2}$ and $u\neq 0$. Also, we obtains that $u$ is a nontrival solution for \eqref{2.3} by point-wise limits. Now, we want to prove that $u$ is a localized ground state.

 Let $L$ be a positive integar such that
\begin{align*}
\liminf_{k\rightarrow\infty}J_{k}(u^{k})&\geq \liminf_{k\rightarrow\infty}\alpha\sum_{-L\leq l\leq L}|u^{k}_{l}|^{2}|u^{k}_{l-1}|^{2}+\frac{\sigma\beta}{\sigma+1}\sum_{-L\leq l\leq L}|u^{k}_{l}|^{2\sigma+2}\\
&\geq \alpha\sum_{-L\leq l\leq L}|u_{l}|^{2}|u_{l-1}|^{2}+\frac{\sigma\beta}{\sigma+1}\sum_{-L\leq l\leq L}|u_{l}|^{2\sigma+2}.
\end{align*}
Let $L\rightarrow\infty$, it  obtains that
$$
\liminf_{k\rightarrow\infty}J_{k}(u^{k})\geq J(u)\geq m,
$$
\begin{align}\label{3.8}
\liminf_{k\rightarrow \infty}m_{k}\geq m.
\end{align}
For any given $\epsilon>0$, let $u'\in N$ such that
$$
J(u')=\alpha\sum_{l\in\mathbb{Z}}|u'_{l}|^{2}|u'_{l-1}|^{2}+\frac{\sigma\beta}{\sigma+1}\sum_{l\in\mathbb{Z}}|u'_{l}|^{2\sigma+2}<m+\epsilon.
$$
Choose $t_{1}>1$ such that
$$
J(t_{1}u')<m+\epsilon,\quad I(t_{1}u')<0.
$$
From density argument, there exists a finite supported sequence $v=\{v_{l}\}$ sufficiently close to $t_{1}u'$ in $l^{2}$ such that

\begin{align*}
I(v)<0\quad \mbox{and}\quad \alpha\sum_{l\in\mathbb{Z}}|v_{l}|^{2}|v_{l-1}|^{2}+\frac{\sigma\beta}{\sigma+1}\sum_{l\in\mathbb{Z}}|v_{l}|^{2\sigma+2}<m+\epsilon.
\end{align*}
Thus, there exists  $t_{2}\in(0,1)$ such that $t_{2}v\in \mathcal{N}$ and $J(t_{2}v)< m+\epsilon$.

Choose $k$ large enough such that $P_{k}$ contains the support of $v$. Let $v^{k}\in l^{2}_{k}$ such that $v^{k}_{l}=t_{2}v_{l}$ for $l\in P_{k}$. It concludes that
$$
I_{k}(v^{k})=I(t_{2}v),
$$
$$
J_{k}(v^{k})=J(t_{2}v)<m+\epsilon.
$$
It implies
$$
\limsup_{k\rightarrow\infty} m_{k}<m+\epsilon.
$$

Combining with \eqref{3.8}, we have $\lim_{k\rightarrow\infty}m_{k}=m$. It completes the proof.
\end{proof}
\begin{remark}
With the similar argument in \eqref{2.6}, \eqref{2.7}, the power of the localized ground state has a lower bound $C_{1}>0$.  For more estimates, we refer to  \cite{Kara}.
\end{remark}

\section{Global convergence}
\begin{theorem}
Let $u^{k}\in l^{2}_{k}$ be the periodic ground state to the equation \eqref{2.3}. Then, there exists a ground state $u\in l^{2}$ such that $u^{k}$ strongly convergent to $u$ in $l^{2}_{k}$ after some discrete  translation.
\end{theorem}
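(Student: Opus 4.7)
The plan is to upgrade the pointwise convergence (after translation) already established in the previous existence proof to strong $l^2$ convergence, leveraging the sharp asymptotic $m_k\to m$ obtained there.

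First I would fix the translated subsequence $\{u^k\}$ from the previous proof, along which $u^k_l\to u_l$ for every $l\in\mathbb{Z}$, with $u\in\mathcal{N}$ a localized ground state and $m_k\to m$. Using $u^k\in\mathcal{N}_k$ and $u\in\mathcal{N}$ to eliminate the quadratic contributions from the energies,
\begin{align*}
m_k &= \alpha\sum_{l\in P_k}|u^k_l|^2|u^k_{l+1}|^2+\frac{\sigma\beta}{\sigma+1}\sum_{l\in P_k}|u^k_l|^{2\sigma+2},\\
m &= \alpha\sum_{l\in\mathbb{Z}}|u_l|^2|u_{l+1}|^2+\frac{\sigma\beta}{\sigma+1}\sum_{l\in\mathbb{Z}}|u_l|^{2\sigma+2}.
\end{align*}
Pointwise convergence on any finite window $[-L,L]\subset P_k$, followed by $L\to\infty$, gives
\[
\liminf_{k\to\infty}\sum_{l\in P_k}|u^k_l|^2|u^k_{l+1}|^2\geq\sum_{l\in\mathbb{Z}}|u_l|^2|u_{l+1}|^2,\qquad\liminf_{k\to\infty}\sum_{l\in P_k}|u^k_l|^{2\sigma+2}\geq\sum_{l\in\mathbb{Z}}|u_l|^{2\sigma+2}.
\]
Combining these with positive weights $\alpha$ and $\sigma\beta/(\sigma+1)$ and using $m_k\to m$, superadditivity of $\liminf$ forces both inequalities to be equalities; a standard subsequence argument then promotes each $\liminf$ to an actual limit, so each nonlinear series converges individually to its $\mathbb{Z}$-counterpart.

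Second, the Nehari identity $I_k(u^k)=0$ reads
\[
\langle(-\triangle_d-\omega)u^k,u^k\rangle_k=2\alpha\sum_{l\in P_k}|u^k_l|^2|u^k_{l+1}|^2+\beta\sum_{l\in P_k}|u^k_l|^{2\sigma+2},
\]
and the analogous identity holds for $u$ on $\mathbb{Z}$. By the previous step both right-hand sides converge to their $\mathbb{Z}$-counterparts, so $\langle(-\triangle_d-\omega)u^k,u^k\rangle_k\to\langle(-\triangle_d-\omega)u,u\rangle$. Viewing $u^k$ as an element of $l^2(\mathbb{Z})$ supported in $P_k$, the uniform bound from Lemma \ref{lem3.1} combined with the pointwise convergence yields weak convergence $u^k\rightharpoonup u$ in $l^2$. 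Expanding
\[
\langle(-\triangle_d-\omega)(u^k-u),u^k-u\rangle=\langle(-\triangle_d-\omega)u^k,u^k\rangle-2\langle(-\triangle_d-\omega)u,u^k\rangle+\langle(-\triangle_d-\omega)u,u\rangle,
\]
the cross term vanishes in the limit by weak convergence and the first term was just shown to converge to the third, so the left-hand side tends to $0$. The coercivity $\langle(-\triangle_d-\omega)v,v\rangle\geq|\omega|\,\|v\|_{l^2}^2$, which holds since $-\triangle_d\geq 0$ and $\omega<0$, then forces $\|u^k-u\|_{l^2}\to 0$.

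The main obstacle is the Fatou-matching step, where one must upgrade $\liminf\geq$ to $\lim=$ for each of the two nonlinear series \emph{separately}, not just for their combination; this succeeds only because both summands are nonnegative (thanks to $\alpha,\beta>0$) and because $m_k\to m$ is sharp. A subsidiary technical point is that the shifted index $l+1$ in the periodic hopping sum is interpreted modulo $k$, but since $u\in l^2$ has vanishing tails the finite-window truncation on $[-L,L]$ absorbs this identification harmlessly. Everything else is a standard Hilbert-space coercivity argument.
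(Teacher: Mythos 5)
Your argument is correct in substance but follows a genuinely different route from the paper. The paper performs a Brezis--Lieb--type splitting of the functionals evaluated at the difference: it shows directly that $J_{k}(u'^{k}-u)\rightarrow 0$ and $I_{k}(u'^{k}-u)\rightarrow 0$ by separating a finite window $|l|<M$ (handled by pointwise limits) from the tails (handled by smallness of $\sum_{|l|\geq M}|u_{l}|^{2}$ and H\"older), then observes that $J_{k}-I_{k}$ applied to $u'^{k}-u$ is a positive combination of the two nonlinear sums of the difference, which therefore tend to zero; this gives $\|u'^{k}-u\|_{l^{p}_{k}}\rightarrow 0$ for $p>2$ and the coercivity of $-\triangle_{d}-\omega$ finishes. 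You instead exploit the sharpness of $m_{k}\rightarrow m$ together with Fatou and the strict positivity of $\alpha$ and $\sigma\beta/(\sigma+1)$ to pin down the limits of the two nonlinear series \emph{individually}, transfer this through the Nehari identities to convergence of the quadratic form $\langle(-\triangle_{d}-\omega)u^{k},u^{k}\rangle_{k}$, and conclude by the standard ``weak convergence plus convergence of the energy norm implies strong convergence'' mechanism. Your version is cleaner and avoids the paper's somewhat delicate tail estimates on the cross terms of the quartic sum; the paper's version has the advantage of never needing to compare the periodic quadratic form with the whole-line one.

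That comparison is the one place where your write-up needs an additional line. You identify $u^{k}$ with its zero extension off $P_{k}$ and then treat $\langle(-\triangle_{d}-\omega)u^{k},u^{k}\rangle$ as a form on $l^{2}(\mathbb{Z})$, but the Nehari identity you invoke gives convergence of the \emph{periodic} form $\langle(-\triangle_{d}-\omega)u^{k},u^{k}\rangle_{k}$, which differs from the whole-line form of the truncation by boundary terms of size comparable to $|u^{k}_{a}|^{2}+|u^{k}_{b}|^{2}$ at the two endpoints $a,b$ of $P_{k}$ (the periodic Laplacian wraps around, the truncated one jumps to zero). You flag the analogous wraparound only for the hopping sum. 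The gap is closable with what you have already proved: since $\sum_{l\in P_{k}}|u^{k}_{l}|^{2\sigma+2}\rightarrow\sum_{l\in\mathbb{Z}}|u_{l}|^{2\sigma+2}$ and the partial sums over $|l|\leq L$ converge, the tails $\sup_{l\in P_{k},\,|l|>L}|u^{k}_{l}|$ are uniformly small for $L$ large, and $|a|,|b|\sim k/2\rightarrow\infty$, so the boundary discrepancy vanishes. Please insert that observation explicitly; with it, your proof is complete.
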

\begin{proof}
Let $u^{k}\in l^{2}_{k}$ be the periodic ground state and $b_{k}\in\mathbb{Z}$. Now, we consider a translation
\begin{align*}
u'^{k}_{l}=u^{k}_{l+b_{k}}.
\end{align*}
From the argument above, we can assume that $u'^{k}_{l}\rightarrow u_{l}$ for all $l\in \mathbb{Z}$ where $u$ is a ground state. We want to prove that $||u'^{k}-u||_{l^{2}_{k}}$ convergent to $0$ as $k\rightarrow \infty$. First, it concludes that
\begin{align}
J_{k}(u'^{k}-u)\rightarrow 0,\quad I_{k}(u'^{k}-u)\rightarrow 0,\quad \mbox{as}\quad k\rightarrow \infty.
\end{align}
Indeed,
\begin{align*}
J_{k}(u'^{k}-u)=&\langle-\triangle_{d}(u'^{k}-u),(u'^{k}-u)\rangle-\omega\langle (u'^{k}-u),(u'^{k}-u)\rangle\\
&-\alpha\sum_{l\in P_{k}}|u'^{k}_{l}-u_{l}|^{2}|u'^{k}_{l+1}-u_{l+1}|^{2}-\frac{\beta}{\sigma+1}\sum_{l\in P_{k}}|u'^{k}_{l}-u_{l}|^{2\sigma+2}\\
=&J_{k}(u'^{k})-J_{k}(u)-2\langle-\triangle_{d}(u'^{k}-u),u\rangle-2\omega\langle (u'^{k}-u),u\rangle\\
&-\alpha\sum_{l\in P_{k}}|u'^{k}_{l}-u_{l}|^{2}|u'^{k}_{l+1}-u_{l+1}|^{2}-\frac{\beta}{\sigma+1}\sum_{l\in P_{k}}|u'^{k}_{l}-u_{l}|^{2\sigma+2}\\
&+\alpha\sum_{l\in P_{k}}|u'^{k}_{l}|^{2}|u'^{k}_{l+1}|^{2}+\frac{\beta}{\sigma+1}\sum_{l\in P_{k}}|u'^{k}_{l}|^{2\sigma+2}\\
&-\alpha\sum_{l\in P_{k}}|u_{l}|^{2}|u_{l+1}|^{2}-\frac{\beta}{\sigma+1}\sum_{l\in P_{k}}|u_{l}|^{2\sigma+2}.
\end{align*}

Similar with the argument in \cite{Pan1}, it obtains that $J_{k}(u'^{k})\rightarrow J(u)=m$ and $-2\langle-\triangle_{d}(u'^{k}-u),u\rangle-2\omega\langle (u'^{k}-u),u\rangle\rightarrow 0$, as $k\rightarrow\infty$.

Since that $||u'^{k}||_{l^{2}_{k}}$ and $||u||_{l^{2}}$ is bounded. For any given $\epsilon>0$, there exists $M>0$ such that
$\sum_{|l|\geq M}|u_{l}|^{2}<\epsilon$. Therefore, we have
\begin{align*}
&\alpha\!\!\!\!\!\sum_{l\in P_{k},|l|\geq M}\!\!\!\!\!|u'^{k}_{l}|^{2}|u'^{k}_{l+1}|^{2}-\alpha\!\!\!\!\!\sum_{l\in P_{k},|l|\geq M}\!\!\!\!\!|u'^{k}_{l}-u_{l}|^{2}|u'^{k}_{l+1}-u_{l+1}|^{2}\\
\leq &\alpha\!\!\!\!\!\sum_{l\in P_{k},|l|\geq M}\!\!\!\!\!(|u'^{k}_{l}|^{2}-|u'^{k}_{l}-u_{l}|^{2})|u_{l+1}|^{2}+\alpha\!\!\!\!\!\sum_{l\in P_{k},|l|\geq M}\!\!\!\!\!|u'^{k}_{l}-u_{l}|^{2}(|u'^{k}_{l+1}|^{2}-|u'^{k}_{l+1}-u_{l+1}|^{2})\\
\leq &\alpha\Big(2||u'^{k}||_{l^{2}_{k}}||u||_{l^{2}}+||u||^{2}_{l^{2}}\Big)\!\!\!\!\!\sum_{l\in P_{k},|l|\geq M}\!\!\!\!\!|u_{l+1}|^{2}+\alpha(||u'^{k}||^{2}_{l^{2}_{k}}+||u||^{2}_{l^{2}})(2||u'^{k}||_{l^{2}_{k}}+||u||_{l^{2}})\Big(\!\!\!\!\!\sum_{l\in P_{k},|l|\geq M}\!\!\!\!\!|u_{l+1}|^{2}\Big)^{\frac{1}{2}}\\
\lesssim&_{M',||u||^{2}_{l^{2}}} \epsilon
\end{align*}
for $k$ large enough.

Also, we have
\begin{align*}
-\frac{\beta}{\sigma+1}\sum_{l\in P_{k},|l|\geq M}|u'^{k}_{l}-u_{l}|^{2\sigma+2}+\frac{\beta}{\sigma+1}\sum_{l\in P_{k},|l|\geq M}|u'^{k}_{l}|^{2\sigma+2}\lesssim_{M',||u||^{2}_{l^{2}}} \epsilon
\end{align*}
for $k$ large enough.

On the other hand, from the point limits, it concludes that
\begin{align*}
&-\alpha\sum_{l\in P_{k},|l|<M}|u'^{k}_{l}-u_{l}|^{2}|u'^{k}_{l+1}-u_{l+1}|^{2}-\frac{\beta}{\sigma+1}\sum_{l\in P_{k},|l|<M}|u'^{k}_{l}-u_{l}|^{2\sigma+2}\\
&+\alpha\sum_{l\in P_{k},|l|<M}|u'^{k}_{l}|^{2}|u'^{k}_{l+1}|^{2}+\frac{\beta}{\sigma+1}\sum_{l\in P_{k},|l|<M}|u'^{k}_{l}|^{2\sigma+2}\\
&-\alpha\sum_{l\in P_{k},|l|<M}|u_{l}|^{2}|u_{l+1}|^{2}-\frac{\beta}{\sigma+1}\sum_{l\in P_{k},|l|<M}|u_{l}|^{2\sigma+2}<\epsilon
\end{align*}
for $k$ large enough.

Combine with H\"{o}lder inequality, there holds $J^{k}(u'^{k}-u)\rightarrow 0$ .

 With similar argument, we obtain $I^{k}(u'^{k}-u)\rightarrow 0$. Therefore,
\begin{align*}
&J^{k}(u'^{k}-u)-I^{k}(u'^{k}-u)\\
=&\alpha\sum_{l\in P_{k}}|u'^{k}_{l}-u_{l}|^{2}|u'^{k}_{l-1}-u_{l-1}|^{2}+\frac{\sigma\beta}{\sigma+1}\sum_{l\in P_{k}}|u'^{k}_{l}-u_{l}|^{2\sigma+2}\rightarrow 0.
\end{align*}
Since $||\cdot||_{l^{\infty}_{k}}\leq ||\cdot||_{l^{2\sigma+2}_{k}}$, we have $||u'^{k}-u||_{l^{\infty}_{k}}\rightarrow 0$. From Lemma \ref{lem3.1}, it is known that $||u'^{k}-u||_{l^{p}_{k}}\rightarrow 0$ for $p>2$. Hence,
\begin{align*}
&|\omega|||u'^{k}-u||^{2}_{l^{2}_{k}}\\
\leq &\langle-\triangle_{d}(u'^{k}-u),(u'^{k}-u)\rangle-\omega\langle (u'^{k}-u),(u'^{k}-u)\rangle\\
=&2\alpha\sum_{l\in P_{k}}|u'^{k}_{l}-u_{l}|^{2}|u'^{k}_{l+1}-u_{l+1}|^{2}+\beta\sum_{l\in P_{k}}|u'^{k}_{l}-u_{l}|^{2\sigma+2}\\
\leq&2\alpha||u'^{k}-u||^{4}_{l^{4}_{k}}+\beta||u'^{k}-u||^{2\sigma+2}_{l^{2\sigma+2}_{k}}\rightarrow 0.
\end{align*}
It completes the proof.

\end{proof}

\noindent\textit{\textbf{Acknowledgments.}}













\begin{thebibliography}{3}
\bibitem{Kara}
N. I. Karachalios, B. S\'{a}nchez-Rey, P. G. Kevrekidis and J. Cuevas,
Breathers for the Discrete Nonlinear Schr\"{o}dinger equation with nonlinear hopping

\bibitem{Pan1}
A. Pankov and V. Rothos, Periodic and decaying solutions in discrete
nonlinear Schr\"{o}dinger with
saturable nonlinearity, Proc. R. Soc. A (2008) 464, 3219-3236


\bibitem{Pan2}
Pankov, A. 2005a Travelling waves and periodic oscillations in Fermi-Pasta-Ulam lattices.
London, UK: Imperial College Press.

\bibitem{Pan3}
Pankov, A. 2005b Periodic nonlinear Schro¨dinger equation with an application to photonic
crystals. Milan J. Math. 73, 259-287.

\bibitem{Pan4}
Pankov, A. 2006 Gap solitons in periodic discrete NLS equations. Nonlinearity 19, 27-40.

\bibitem{Neh}
Nehari, Z. 1960 On a class of nonlinear second order differential equations. Trans. Am. Math. Soc.
95, 101-123.

\bibitem{Tes}
Teschl, G. 2000 Jacobi operators and completely integrable nonlinear lattices. Providence, RI:
American Mathematical Society.

\bibitem{Wein}
Weinstein, M. I., Excitation thresholds for nonlinear localized modes on lattices, Nonlinearity 12, 673 (1999).

\bibitem{Joh}
Johansson, M. and Aubry, S., Existence and stability of quasiperiodic breathers in the discrete nonlinear Schr\"{o}dinger equation, Nonlinearity 10, 1151 (1997).

\bibitem{Aub}
Aubry, S., Breathers in nonlinear lattices: existence, linear stability and quantization, Physica D 103, 201 (1997).

\bibitem{Zha}
Zhang, G. P., Breather solutions of the discrete nonlinear schr\"{o}dinger equations with unbounded potentials, J. Math.
Phys 50(1), 013505 (2009).

\bibitem{Lio}
Lions, P. L., The concentration compactness principle in the calculus of variations I: The locally compact case, Ann.
Inst. Henri Poincar\'{e}, Anal. Nonlin\'{e}aire 1, 223 (1984).

\bibitem{Cue}
Cuevas, J., Karachalios, N. I. and Palmero, F., Lower and upper estimates on the excitation threshold for breathers in
discrete nonlinear Schr¨odinger lattices, J. Math. Phys. 50(11), 112705 (2009)

\bibitem{Kara2}
Nikos I. Karachalios, Athanasios N. Yannacopoulos, Global existence and compact attractors for the
discrete nonlinear Schr\"{o}dinger equation, J. Differential Equations 217 (2005) 88-123


\end{thebibliography}
\end{document}